\documentclass[12pt]{amsart}
\usepackage{geometry}                % See geometry.pdf to learn the layout options. There are lots.
\geometry{letterpaper}                   % ... or a4paper or a5paper or ... 
\usepackage{graphicx}
\usepackage{amssymb}
\usepackage{amsthm}
\usepackage{epstopdf}
\DeclareGraphicsRule{.tif}{png}{.png}{`convert #1 `dirname #1`/`basename #1 .tif`.png}
\newtheorem{theorem}{Theorem}
\newtheorem{lemma}{Lemma}
\newtheorem{proposition}{Proposition}

\numberwithin{equation}{section}

\title{An application of functional analysis to the Riemann zeta function}
\author{Kevin Smith}
%\date{}                                           % Activate to display a given date or no date

\begin{document}
\maketitle

\begin{abstract} Lindel\"of conjectured that the Riemann zeta function $\zeta(\sigma+it)$ grows more slowly than any fixed positive power of $t$ as $t\rightarrow\infty$ when $\sigma\geq 1/2$. Hardy and Littlewood showed that this is equivalent to the existence of the $2k$th moments for all fixed $k\in\mathbb{N}$ and $\sigma>1/2$. In this paper we show that the completeness of Hilbert space implies that if the $2k$th moment exists for $\sigma>\sigma_k>1/2$ then it also exists on the line $\sigma=\sigma_k$.
\end{abstract}

\section{Introduction}
For fixed $\sigma\geq 1/2$ and $k\in\mathbb{N}$ we define the $2k$th moment of the Riemann zeta function by
\begin{eqnarray}\label{moment}
M_k(\sigma,T)=\int_1^T|\zeta(\sigma+it)|^{2k}dt\hspace{1cm}(T>1)\nonumber
\end{eqnarray}
and the number of ways of writing the natural number $n$ as a product of $k$ factors is denoted by $d_k(n)$. In 1923, Hardy and Littlewood \cite{HL2} showed that the Lindel\"of hypothesis is equivalent to the statement that
\begin{eqnarray}\label{asy0}
\lim_{T\rightarrow\infty}\frac{M_k(\sigma,T)}{T} =\sum_{n=1}^{\infty}\frac{d^2_k(n)}{n^{2\sigma}} \hspace{1cm} (\sigma>1/2,k\in\mathbb{N}).
\end{eqnarray}
 The cases $k=1$ and $2$ are classical results of Hardy and Littlewood \cite{HL} and Ingham \cite{I}, respectively, yet the asymptotic formula (\ref{asy0}) is not known to hold throughout the region $\sigma>1/2$ for any other value of $k$. 

The moments are convex functions of $\sigma$, a consequence of which is that there exists a non-decreasing sequence $1/2\leq \sigma_k< 1$ such that 
\begin{eqnarray}\label{asy3}
\lim_{T\rightarrow\infty}\frac{M_k(\sigma,T)}{T} =\sum_{n=1}^{\infty}\frac{d^2_k(n)}{n^{2\sigma}} \hspace{1cm} (\sigma>\sigma_k)
\end{eqnarray}
while
\begin{eqnarray}\label{asy1/2}
 \limsup_{T\rightarrow\infty}\frac{M_k(\sigma,T)}{T}=\infty  \hspace{1cm}(1/2\leq \sigma< \sigma_k \textrm{ if }\sigma_k>1/2)
\end{eqnarray}
and
\begin{eqnarray}\label{mbound}
M_k(\sigma_k,T)\ll T^{1+\epsilon} \hspace{1cm}(\epsilon>0)
\end{eqnarray} 
in general (Titchmarsh \cite{Titch}, Section 7.9). Upper bounds for $\sigma_k$ and estimates for the rate of convergence in (\ref{asy3}) were given by Iv\'ic (\cite{IV}, Section 8.5) yet, beyond the theorems of Hardy and Littlewood and Ingham cited above, which give  $\sigma_1=\sigma_2=1/2$, the strongest known estimates are $\sigma_3\leq 7/12$ due to Iv\'ic cited above and $\sigma_4\leq 5/8$, the latter being a consequence of Heath-Brown's  twelfth moment estimate \cite{HB}. \\

The question\footnote{This question was brought to my attention by Peter Sarnak during the \emph{50 years of number theory and random matrix theory} conference at the IAS (2022).} of the behaviour on a hypothetical line $\sigma_k>1/2$, i.e. whether (\ref{mbound}) can be improved if $\sigma_k>1/2$, is more difficult. The purpose of this paper is to show that the completeness of Hilbert space implies the following. 
\begin{theorem}\label{main}
If $\sigma_k>1/2$ then
 \begin{eqnarray}\label{asy4}
\lim_{T\rightarrow\infty}\frac{M_k(\sigma_k,T)}{T} =\sum_{n=1}^{\infty}\frac{d^2_k(n)}{n^{2\sigma_k}}.
\end{eqnarray}
\end{theorem}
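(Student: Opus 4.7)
My plan is to realize $t\mapsto\zeta(\sigma+it)^k$ as an element of the Besicovitch Hilbert space $B^2$ for each $\sigma$ just above $\sigma_k$, establish a Cauchy condition as $\sigma\downarrow\sigma_k$, and use completeness to produce a $B^2$-limit whose norm is the right-hand side of (\ref{asy4}). I recall that $B^2$ carries the inner product $\langle f,g\rangle_{B^2}=\lim_{T\to\infty}T^{-1}\int_0^T f(t)\overline{g(t)}\,dt$, with Parseval's identity $\|f\|_{B^2}^2=\sum_\lambda|a_\lambda(f)|^2$, where $a_\lambda(f)=\langle f,e^{i\lambda\cdot}\rangle_{B^2}$ are the Bohr--Fourier coefficients, and in which trigonometric polynomials are dense.

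First, for $\sigma>\sigma_k$, hypothesis (\ref{asy3}) identifies the mean-square of $f_\sigma(t):=\zeta(\sigma+it)^k$ as $\sum_n d_k^2(n)/n^{2\sigma}$, while a Dirichlet polynomial approximation argument (elementary for $\sigma>1$, and extending to $\sigma>\sigma_k$ via a standard diagonal-estimate consequence of (\ref{asy3})) identifies the Bohr--Fourier coefficients as $a_{-\log n}(f_\sigma)=d_k(n)/n^\sigma$. Parseval then gives
\begin{equation*}
\|f_{\sigma_1}-f_{\sigma_2}\|_{B^2}^2 = \sum_{n=1}^\infty d_k^2(n)\bigl(n^{-\sigma_1}-n^{-\sigma_2}\bigr)^2.
\end{equation*}
Since $\sigma_k>1/2$, the Dirichlet series $\sum d_k^2(n)n^{-s}$ (which has abscissa of convergence $1$) converges absolutely at $s=2\sigma_k$; hence dominated convergence forces the right side to $0$ as $\sigma_1,\sigma_2\downarrow\sigma_k$. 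Completeness of $B^2$ then produces $g\in B^2$ with $f_\sigma\to g$ in $B^2$; passing coefficients and norm to the limit yields $a_{-\log n}(g)=d_k(n)/n^{\sigma_k}$ and $\|g\|_{B^2}^2=\sum_n d_k^2(n)/n^{2\sigma_k}$.

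The main obstacle is identifying $g$ with $f_{\sigma_k}$ in $B^2$, i.e.\ showing $\|f_\sigma-f_{\sigma_k}\|_{B^2}\to0$ as $\sigma\downarrow\sigma_k$. The approach is to use the fundamental theorem of calculus
\begin{equation*}
\zeta(\sigma+it)^k-\zeta(\sigma_k+it)^k = \int_{\sigma_k}^{\sigma}(\zeta^k)'(u+it)\,du
\end{equation*}
together with Minkowski's integral inequality to obtain
\begin{equation*}
\|f_\sigma-f_{\sigma_k}\|_{B^2}\leq \int_{\sigma_k}^{\sigma}\|(\zeta^k)'(u+i\cdot)\|_{B^2}\,du.
\end{equation*}
For $u>\sigma_k$, Parseval applied to $(\zeta^k)'$ gives $\|(\zeta^k)'(u+i\cdot)\|_{B^2}^2=\sum_n d_k^2(n)(\log n)^2/n^{2u}$, and this quantity remains bounded as $u\downarrow\sigma_k$ (again because $\sigma_k>1/2$), so the right side vanishes as $\sigma\downarrow\sigma_k$. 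The technical crux is justifying the Minkowski step at the level of the $\limsup$-in-$T$ seminorm: one needs a dominated-convergence argument controlling $T^{-1}\int_0^T|(\zeta^k)'(u+it)|^2\,dt$ uniformly in $T$ and in $u$ near $\sigma_k$, which I expect to handle by combining (\ref{mbound}) on a slightly shifted line with Cauchy's integral formula for $\zeta'$ on small disks. Once this identification is in hand, $\|f_{\sigma_k}\|_{B^2}^2=\|g\|_{B^2}^2=\sum_n d_k^2(n)/n^{2\sigma_k}$, which is (\ref{asy4}).
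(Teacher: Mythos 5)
Your first half---computing the Bohr--Fourier coefficients $a_{-\log n}(f_\sigma)=d_k(n)/n^{\sigma}$, using (\ref{asy3}) to get $\|f_\sigma\|^2=\sum_n d_k^2(n)n^{-2\sigma}$ and hence $f_\sigma\in B^2$ for $\sigma>\sigma_k$, then the Parseval/dominated-convergence argument showing $f_\sigma$ is Cauchy in $B^2$ as $\sigma\downarrow\sigma_k$---is essentially the paper's Lemma \ref{hlemma} together with the opening of Section \ref{lemma2proof}, and is sound. The gap is exactly at the point you yourself flag as the ``technical crux'': identifying the $B^2$-limit $g$ with $f_{\sigma_k}$. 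Your route via the fundamental theorem of calculus and Minkowski requires, after taking $\limsup_{T\to\infty}$, a domination of $T^{-1}\int_1^T|(\zeta^k)'(u+it)|^2dt$ by a function of $u$ integrable on $(\sigma_k,\sigma)$ \emph{uniformly in $T$}, and the tools you propose cannot deliver this. Cauchy's formula forces the disks to have radius $\rho\leq u-\sigma_k$ (they must stay in the half-plane where $\zeta^k$ is controlled), and combined with (\ref{mbound})---which gives only $M_k(\sigma_k,T)\ll T^{1+\epsilon}$, not $\ll T$---it yields a bound of order $(u-\sigma_k)^{-2}T^{\epsilon}$ for the mean square of the derivative: neither integrable in $u$ near $\sigma_k$ (you would need $\int(u-\sigma_k)^{-1}du$ to converge) nor uniform in $T$. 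The pointwise-in-$u$ identity $\|(\zeta^k)'(u+i\cdot)\|^2=\sum_n d_k^2(n)(\log n)^2n^{-2u}$ is itself not an immediate consequence of (\ref{asy3}) (it needs a separate Carlson-type mean-value argument for $(\zeta^k)'$), and in any case it does not rescue the step, because reverse Fatou still needs the uniform domination. In effect the estimate you are missing is as strong as the theorem itself.

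The paper avoids derivatives entirely by an Abelian--Tauberian argument that exploits the positivity of $|\zeta|^{2k}$, which your scheme never uses. It forms the Laplace transform $L_{\sigma,k}(x)=\int_1^{\infty}|\zeta(\sigma+it)|^{2k}e^{-xt}dt$; for fixed $x>0$ one has $L_{\sigma(n),k}(x)\to L_{\sigma_k,k}(x)$ by uniform convergence of the integrand, Abel's theorem gives $\|f_n^k\|^2=\lim_{x\searrow0}xL_{\sigma(n),k}(x)$, and the Cauchy property supplies the uniformity needed to interchange the two limits, giving $xL_{\sigma_k,k}(x)\to\sum_n d_k^2(n)n^{-2\sigma_k}$; the Hardy--Littlewood Tauberian theorem (this is where positivity enters) then converts this into the Ces\`aro statement (\ref{asy4}). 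If you want to keep your architecture, replace the FTC/Minkowski identification with such a Tauberian step; without some use of positivity, the boundary identification is out of reach of soft $L^2$ estimates.
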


The result is a consequence of the two lemmas below. Let $B^2=B^2[1,\infty]$ denote the space of Besicovitch almost-periodic functions on $[1,\infty]$, which we discuss in Section \ref{beslemmas}. We show in Section \ref{lemma1proof} that

\begin{lemma}\label{hlemma}
If $\sigma>\sigma_k$ then $\zeta^k(\sigma+it)\in B^2$.
\end{lemma}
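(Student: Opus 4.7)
\emph{Proof plan.} The plan is to approximate $\zeta^k(\sigma+i\cdot)$ in the $B^2$-seminorm by its truncated Dirichlet polynomials
\begin{equation*}
S_N(t) := \sum_{n\leq N} \frac{d_k(n)}{n^{\sigma+it}},
\end{equation*}
which are trigonometric polynomials with spectrum $\{-\log n:n\leq N\}$ and hence belong to $B^2$. The characters $\{n^{-it}\}_{n\in\mathbb{N}}$ are orthonormal in $B^2$, so
\begin{equation*}
\|S_M-S_N\|_{B^2}^2 = \sum_{N<n\leq M}\frac{d_k^2(n)}{n^{2\sigma}}.
\end{equation*}
For $\sigma>\sigma_k$ the right-hand side of (\ref{asy3}) is finite, so $\{S_N\}$ is Cauchy in $B^2$, and completeness produces a limit $g\in B^2$ with $S_N\to g$.

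It remains to show that $\zeta^k(\sigma+i\cdot)$ lies in the equivalence class of $g$, that is, $\|\zeta^k(\sigma+i\cdot)-S_N\|_{B^2}\to 0$ as $N\to\infty$. Expanding the squared seminorm and invoking (\ref{asy3}) for $\|\zeta^k(\sigma+i\cdot)\|_{B^2}^2$ together with the elementary $\|S_N\|_{B^2}^2=\sum_{n\leq N}d_k^2(n)/n^{2\sigma}$, the remaining content is to establish the Bohr--Fourier coefficient identity
\begin{equation*}
\lim_{T\to\infty}\frac{1}{T}\int_1^T \zeta^k(\sigma+it)\,n^{it}\,dt \;=\; \frac{d_k(n)}{n^\sigma}\qquad(n\in\mathbb{N}).
\end{equation*}
Granting this, the cross term $\mathrm{Re}\,\langle\zeta^k(\sigma+i\cdot),S_N\rangle_{B^2}$ equals $\sum_{n\leq N}d_k^2(n)/n^{2\sigma}$ and the squared distance collapses to the tail $\sum_{n>N}d_k^2(n)/n^{2\sigma}\to 0$.

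The Bohr--Fourier coefficient evaluation is the main obstacle and the only step requiring analytic input beyond (\ref{asy3}). My approach is by contour shift: writing the integral as $(iTn^\sigma)^{-1}\int_{\sigma+i}^{\sigma+iT}\zeta^k(s)n^s\,ds$ and moving the line of integration to $\operatorname{Re}(s)=c>1$, where the Dirichlet expansion $\zeta^k(s)=\sum_m d_k(m)/m^s$ converges absolutely, the diagonal $m=n$ term contributes $i(T-1)d_k(n)$ and the off-diagonal sum is $O(1)$, producing $d_k(n)/n^\sigma$ in the limit. One must then show that the horizontal integrals $\int_\sigma^c \zeta^k(u+iT)n^{u+iT}\,du$ are $o(T)$: by Fubini applied to (\ref{asy3}) their $L^2$-average in $T$ is bounded, so they are $o(T)$ on a set of $T$ of density one, and promoting this to the full limit $T\to\infty$---using (\ref{mbound}) together with the slow variation (Lipschitz constant $O(1/T)$) of the mean $\frac{1}{T}\int_1^T\zeta^k(\sigma+it)n^{it}\,dt$ to tame the density-zero exceptional set---is where the proof in Section \ref{lemma1proof} will presumably concentrate its effort.
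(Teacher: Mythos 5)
Your proposal is correct in outline and shares the paper's skeleton (approximate $\zeta^k(\sigma+i\cdot)$ by the Dirichlet polynomials $S_N=\zeta_{k,N}$, compute the Bohr--Fourier coefficients by a contour shift to a line of absolute convergence, and collapse $\|\zeta^k-S_N\|^2$ to the tail $\sum_{n>N}d_k^2(n)n^{-2\sigma}$ via (\ref{asy3})), but it diverges from the paper at the one step that matters: the horizontal segments. The paper bounds them pointwise with the subconvexity estimate (\ref{subbound}), which forces the restriction $k\leq 6$ and is in fact only exploited for $k=1$; general $k$ is then handled by an entirely separate reduction, namely Lemma \ref{kncnslemma} (built on Proposition \ref{CNS}), showing that $\zeta\in B^2$ plus the non-concentration of $\zeta^k$ on null sets --- itself deduced from (\ref{asy3}) and (\ref{complement}) --- already yields $\zeta^k\in B^2$. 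You instead bound the horizontal integrals in mean square over the height $T$, using Fubini and $M_k(u,T)\ll T$ for $u\in[\sigma,c]$ (available from (\ref{asy3}) together with the convexity of the moments in $u$), obtaining $o(T)$ off an exceptional set which is in fact of finite measure, and then patch over the exceptional heights. This treats all $k$ uniformly and bypasses Section \ref{beslemmas} entirely, which is a genuine simplification; the price is that the patching step must be done carefully. Two small cautions there: your parenthetical ``Lipschitz constant $O(1/T)$'' for $G(T)=\frac{1}{T}\int_1^T\zeta^k(\sigma+it)n^{it}\,dt$ is false pointwise, since $G'(T)$ contains the term $\zeta^k(\sigma+iT)n^{iT}/T$, which is not $O(1/T)$; what you actually need (and what suffices) is the integrated modulus-of-continuity bound $|G(T)-G(T')|\ll |T-T'|/T+\bigl(|T-T'|\,M_k(\sigma,T+1)\bigr)^{1/2}/T=o(1)$ for $|T-T'|\to 0$, via Cauchy--Schwarz and $M_k(\sigma,T)\ll T$ (which for $\sigma>\sigma_k$ comes from (\ref{asy3}), not from (\ref{mbound}), the latter being a statement at $\sigma_k$). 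With those repairs your route closes and is, if anything, more self-contained than the paper's for this particular lemma.
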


In Section \ref{lemma2proof} we see that if $\sigma(n)>\sigma_k>1/2$ is a decreasing sequence with limit $\sigma_k$ then the sequence of functions $f^k_n(t)=\zeta^k(\sigma(n)+it)$ is a Cauchy sequence in $B^2$ which therefore has a limit, say $f^k_{\infty}\in B^2$. From this we deduce that the pointwise limit $\zeta^k(\sigma_k+it)$ also belongs to $B^2$ via classical  Abelian and Tauberian arguments. That is

\begin{lemma}\label{clemma}
If $\sigma_k>1/2$ then $\zeta^k(\sigma_k+it)\in B^2$. 
\end{lemma}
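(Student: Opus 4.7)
The plan is to execute the two-step strategy indicated after the statement: first establish that $f_n^k(t)=\zeta^k(\sigma(n)+it)$ is Cauchy in $B^2$, then identify the resulting $B^2$-limit with the pointwise limit $\zeta^k(\sigma_k+it)$ via an Abelian--Tauberian argument.

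For the Cauchy step, each $f_n^k$ lies in $B^2$ by Lemma~\ref{hlemma}, so the $B^2$ inner product $\langle f_n^k,f_m^k\rangle_{B^2}$ is well-defined. Expanding the squared modulus and computing each of the three resulting means (the two diagonal ones being $M_k(\sigma(n),T)/T$ and $M_k(\sigma(m),T)/T$, and the cross one by a Hardy--Littlewood style pairing of Dirichlet coefficients from $\zeta^k(\sigma+it)$ and its conjugate series) yields the Parseval identity
\[
\|f_n^k-f_m^k\|_{B^2}^2=\lim_{T\rightarrow\infty}\frac{1}{T}\int_1^T\bigl|\zeta^k(\sigma(n)+it)-\zeta^k(\sigma(m)+it)\bigr|^2\,dt=\sum_{j=1}^{\infty}d_k^2(j)\bigl(j^{-\sigma(n)}-j^{-\sigma(m)}\bigr)^2.
\]
Each summand is dominated by $4\,d_k^2(j)j^{-2\sigma_k}$ and the majorant converges precisely because $\sigma_k>1/2$; dominated convergence then gives $\|f_n^k-f_m^k\|_{B^2}\to 0$ as $n,m\to\infty$. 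Completeness of the Hilbert space $B^2$ supplies a limit $f^k_{\infty}\in B^2$.

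For the identification step, continuity of $\zeta^k$ on $\{\sigma>1/2\}$ delivers pointwise convergence $f_n^k(t)\to\zeta^k(\sigma_k+it)$. To show that this pointwise limit represents $f^k_{\infty}$ in $B^2$ it suffices to prove
\[
\lim_{n\rightarrow\infty}\limsup_{T\rightarrow\infty}\frac{1}{T}\int_1^T\bigl|\zeta^k(\sigma_k+it)-\zeta^k(\sigma(n)+it)\bigr|^2\,dt=0.
\]
For fixed $T$ and $n$, dominated convergence on the compact region $[\sigma_k,\sigma(1)]\times[1,T]$ gives
\[
\frac{1}{T}\int_1^T\bigl|\zeta^k(\sigma_k+it)-\zeta^k(\sigma(n)+it)\bigr|^2\,dt=\lim_{m\rightarrow\infty}\frac{1}{T}\int_1^T\bigl|\zeta^k(\sigma(m)+it)-\zeta^k(\sigma(n)+it)\bigr|^2\,dt.
\]
Taking $\limsup_T$ and exchanging it with $\lim_m$ via a Fatou argument yields the upper bound $\sum_j d_k^2(j)(j^{-\sigma_k}-j^{-\sigma(n)})^2$, which tends to zero with $n$ by a further application of dominated convergence.

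I expect the main obstacle to be the interchange of $\limsup_T$ and $\lim_m$ in the final display; Fatou's lemma yields only a one-sided inequality. To force equality one may either invoke the Tauberian input supplied by (\ref{mbound}), which bounds $M_k(\sigma_k,T)$ by $T^{1+\epsilon}$ and controls the approach from above, or bypass the issue entirely by computing the $B^2$ Fourier coefficients of the already-constructed limit $f^k_{\infty}$: continuity of the $B^2$ Fourier transform on Cauchy sequences forces these to equal $\lim_{n\to\infty}d_k(j)j^{-\sigma(n)}=d_k(j)j^{-\sigma_k}$, which is precisely the data that uniquely characterises $\zeta^k(\sigma_k+it)$ modulo $B^2$-null functions, so the pointwise limit represents $f^k_{\infty}$ in $B^2$ and the proof is complete.
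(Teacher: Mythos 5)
Your Cauchy-sequence step is essentially the paper's: the Parseval computation giving $\|f_n^k-f_m^k\|^2=\sum_j d_k^2(j)\bigl(j^{-\sigma(n)}-j^{-\sigma(m)}\bigr)^2$ together with the convergent majorant $\sum_j d_k^2(j)j^{-2\sigma_k}$ is exactly what Section \ref{lemma2proof} does (the paper merely makes the rate explicit). The gap is in the identification step, and you have correctly located it: the interchange of $\limsup_T$ with $\lim_m$ is not a technical nuisance but the entire content of the lemma, and neither of your proposed repairs closes it. There is no Fatou-type inequality of the form $\limsup_T\lim_m g_m(T)\leq\lim_m\limsup_T g_m(T)$; it fails precisely when the $T\rightarrow\infty$ convergence of $\frac{1}{T}\int_1^T|\zeta^k(\sigma(m)+it)-\zeta^k(\sigma(n)+it)|^2dt$ degenerates as $\sigma(m)\searrow\sigma_k$, which is exactly the scenario one cannot rule out a priori. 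The bound (\ref{mbound}) does not help: $M_k(\sigma_k,T)\ll T^{1+\epsilon}$ divided by $T$ still permits $\limsup_T M_k(\sigma_k,T)/T=\infty$. And the second repair is circular: knowing that the abstract limit $f^k_{\infty}$ has Fourier coefficients $d_k(j)j^{-\sigma_k}$ says nothing about the concrete function $\zeta^k(\sigma_k+it)$, because uniqueness of Fourier coefficients in $B^2$ only identifies elements already known to lie in $B^2$ --- and whether $\zeta^k(\sigma_k+it)$ lies in $B^2$ (indeed, whether $\frac{1}{T}\int_1^T|\zeta(\sigma_k+it)|^{2k}dt$ even converges) is precisely what is being proved.

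What the paper actually does to bridge from $f^k_{\infty}$ to $\zeta^k(\sigma_k+it)$ is an Abelian--Tauberian detour that replaces the Ces\`aro mean $\frac{1}{T}\int_1^T$ by the Abel mean $x\int_1^{\infty}(\cdot)e^{-xt}dt$. The Laplace transform $L_{\sigma,k}(x)$ of (\ref{Fdef}) \emph{is} continuous in $\sigma$ down to $\sigma_k$ for each fixed $x>0$ (unlike the Ces\`aro limit), Abel's theorem for integrals gives $\lim_{x\searrow 0}xL_{\sigma(n),k}(x)=\|f_n^k\|^2$ for each $n$, and the Cauchy property in $B^2$ supplies the uniformity in $n$ needed to interchange $\lim_n$ with $\lim_{x\searrow 0}$. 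This yields the Abel asymptotic $xL_{\sigma_k,k}(x)\rightarrow\sum_r d_k^2(r)r^{-2\sigma_k}$, and the Hardy--Littlewood Tauberian theorem --- whose Tauberian hypothesis here is the positivity of $|\zeta(\sigma_k+it)|^{2k}$, not (\ref{mbound}) --- converts this back into the Ces\`aro asymptotic (\ref{cds1}). One further step you omit entirely: (\ref{cds1}) is the moment asymptotic, but to conclude $\zeta^k(\sigma_k+it)\in B^2$ the paper still has to feed it into (\ref{cds}) to see that $\zeta^k(\sigma_k+it)$ is not concentrated on a null set and then invoke Lemma \ref{kncnslemma}.
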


Evidently, Theorem \ref{main} amounts to the statement that  \emph{if $\sigma_k>1/2$ then $\zeta^k(\sigma+it)\in B^2$ if and only if $\sigma\geq \sigma_k$}, which may be compared with the statement that \emph{if $\sigma_k=1/2$ then $\zeta^k(\sigma+it)\in B^2$ if and only if $\sigma> 1/2$}.

\section{Besicovitch almost-periodic functions. Concentration on a null set}\label{beslemmas}
Let $f,g\in L^2_{\textrm{loc}}[1,\infty]$. For measurable subsets $S\subseteq [1,\infty]$ we define the natural density
\begin{eqnarray}\label{densitydef}
|S|=\lim_{T\rightarrow\infty}\frac{\textrm{meas}\left(S\cap [1,T]   \right)}{T}\nonumber
\end{eqnarray}
and if $|S|=0$ we say that $S$ is null. We also define 
$\textrm{Pr}\left(|g|\geq x\right)=|X|$ where $X\subseteq [1,\infty]$ is the subset on which $|g|\geq x$. 

Our first proposition is an improvement on the trivial bound 
\begin{eqnarray}\label{holderbound}
\frac{\left|\int_{1}^Tf^2gdt\right|}{\int_{1}^T|f|^2dt}\leq\textrm{ess sup}_{[1,T]}|g|\nonumber
\end{eqnarray}
when $T$ is large if $f\notin L^2[1,\infty]$ and $f$ is not concentrated on a null set, by which we mean the following. 

\begin{proposition}\label{CNS} The following statements are equivalent. If $f$ is as such, we say that $f$ is not concentrated on a null set. Otherwise, $f$ is concentrated on a null set. 
\begin{itemize}
\item[(\emph{a})]{If $S$ is null then 
\begin{eqnarray}\label{meas}
\int_{S\cap[1,T]}|f|^2dt=o\left(\int_{1}^T|f|^2dt\right)\hspace{1cm}(T\rightarrow\infty).
\end{eqnarray}
}
\item[(\emph{b})]{If $g$ is bounded then
\begin{eqnarray}\label{prob}
\limsup_{T\rightarrow\infty}\frac{\left|\int_{1}^Tf^2gdt\right|}{\int_{1}^T|f|^2dt}\leq \sup\{x:\Pr(|g|\geq x)>0\}.
\end{eqnarray}
}

\end{itemize}
Moreover, if $\int_{1}^T|f|^2dt\gg T$ we may include 
\begin{itemize}
\item[(\emph{c})]{If $S$ is null then for every $\epsilon>0$ there is an $N=N(\epsilon)$ and a bounded function $f_{N}$  such that 
\begin{eqnarray}\label{ot}
\limsup_{T\rightarrow\infty}\frac{\int_{S\cap[1,T]}|f-f_{N}|^2dt}{\int_{1}^T|f|^2dt}< \epsilon.
\end{eqnarray}
}
\end{itemize}
If also $\int_{1}^T|f|^2dt\ll T$ then condition \emph{(}c\emph{)} may be replaced with 
\begin{itemize}
\item[(\emph{d})]{For every $\epsilon>0$ there is an $N=N(\epsilon)$ and a bounded function $f_{N}$ such that 
\begin{eqnarray}\label{sup}
\limsup_{T\rightarrow\infty}\frac{1}{T}\int_{1}^{T}|f-f_{N}|^2dt< \epsilon.
\end{eqnarray}
}
\end{itemize}
\end{proposition}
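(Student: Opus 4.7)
The plan is to prove (a) $\Leftrightarrow$ (b) in full generality, then add $\int_1^T|f|^2 dt\gg T$ to include (c), and finally add $\int_1^T|f|^2 dt\ll T$ to replace (c) by (d). The central equivalence (a) $\Leftrightarrow$ (b) says that ``not concentrated on a null set'' is exactly the condition under which the trivial H\"older bound by $\|g\|_\infty$ can be improved to a density-essential supremum of $|g|$. For (a) $\Rightarrow$ (b), given bounded $g$ set $M=\sup\{x:\Pr(|g|\ge x)>0\}$, fix $\eta>0$, and split $[1,T]=E\cup E^c$ at $E=\{|g|\ge M+\eta\}$, which is null by definition of $M$; (a) makes $\int_{E\cap[1,T]}f^2g\,dt=o(\int_1^T|f|^2 dt)$ since $\|g\|_\infty<\infty$, while $|g|<M+\eta$ on $E^c$ handles the rest; letting $\eta\to 0^+$ gives (b). For (b) $\Rightarrow$ (a), given null $S$, take $g=\overline{f^2}/|f|^2\cdot\mathbf{1}_S$ (and $g=0$ where $f=0$); then $|g|\le\mathbf{1}_S$ and $f^2g=|f|^2\mathbf{1}_S$, so $\Pr(|g|\ge x)\le|S|=0$ for every $x>0$, the right-hand side of (\ref{prob}) vanishes, and (a) follows.

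Under the hypothesis $\int_1^T|f|^2 dt\gg T$, the equivalence (a) $\Leftrightarrow$ (c) reduces to the inequality $|f-f_N|^2\le 2|f|^2+2N^2$ (and its reverse): the term $N^2|S\cap[1,T]|=o(N^2T)=o(\int_1^T|f|^2 dt)$ is absorbed because $S$ is null and $\int_1^T|f|^2 dt\gg T$, so (a) on $|f|^2$ transfers to $|f-f_N|^2$ for any bounded $f_N$, and conversely $|f|^2\le 2|f-f_N|^2+2N^2$ transfers (c) back. The direction (d) $\Rightarrow$ (c) under $\int_1^T|f|^2 dt\asymp T$ follows from the trivial $\int_{S\cap[1,T]}|f-f_N|^2 dt\le\int_1^T|f-f_N|^2 dt$ and division by $\int_1^T|f|^2 dt\asymp T$, after rescaling $\epsilon$.

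The main obstacle is (c) $\Rightarrow$ (d) under $\int_1^T|f|^2 dt\asymp T$, since (c) only controls $f$ on null sets whereas (d) demands a global density-$L^2$ approximation. I will argue the contrapositive via (a), equivalent to (c) at this stage. If (d) fails, then even the canonical truncations $f_N=f\mathbf{1}_{|f|^2\le N}$ fail, producing $\delta>0$ and sequences $N_j,T_j\to\infty$ with $\int_1^{T_j}|f|^2\mathbf{1}_{|f|^2>N_j}dt\ge\delta T_j$; the sets $E_j=\{|f|^2>N_j\}\cap[1,T_j]$ satisfy $|E_j|\le CT_j/N_j$ by Chebyshev. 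Extracting a subsequence $j_k$ for which $T_{j_k}$ grows much faster than $T_{j_{k-1}}$ but much slower than $N_{j_k}$, the disjoint pieces $E_k'=E_{j_k}\cap[T_{j_{k-1}},T_{j_k}]$ assemble into a set $S=\bigcup_k E_k'$ of \emph{genuine} density zero (uniformly in $T$, not only along the sampling times) which nevertheless carries a fixed positive fraction of $\int_1^{T_{j_k}}|f|^2 dt$, contradicting (a). The delicate point is verifying density zero for all $T$ and not just $T=T_{j_k}$; this constrains the spacing of the subsequence via a condition of the form $T_{j_k}/T_{j_{k-1}}\ll N_{j_k}$, which is achievable because the defining sequence $(T_j)$ is only required to go to infinity and may be thinned at will.
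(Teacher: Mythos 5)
Your treatment of (a) $\Leftrightarrow$ (b), of (a) $\Leftrightarrow$ (c), and your observation that (d) $\Rightarrow$ (c) is trivial when $\int_1^T|f|^2dt\asymp T$, all match the paper's arguments in substance: the paper proves (a) $\Rightarrow$ (b) by minimising $\sup_{S^c}|g|$ over null sets $S$ (your set $E=\{|g|\ge M+\eta\}$ is exactly the near-optimal competitor), proves (b) $\Rightarrow$ (a) with the same test function $g=e^{-2i\theta_f}\chi_S$, and passes between (a) and (c) via the reverse triangle inequality in $L^2$ where you use the pointwise bound $|f-f_N|^2\le 2|f|^2+2\|f_N\|_\infty^2$; these are interchangeable. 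Where you genuinely diverge is (c) $\Rightarrow$ (d). The paper argues directly: it asserts that $\int_1^T|f|^2dt\ll T$ forces $f$ to be bounded off a null set $S$, approximates $f$ on $S^c$ by a bounded function and on $S$ by the $f_{N,S}$ supplied by (c), and adds the two contributions. Your contrapositive route via the truncations $f\mathbf{1}_{|f|^2\le N}$ and an explicit null set carrying positive relative mass is different and in some ways more robust, since it does not rely on that boundedness assertion (which is delicate: for $f=\sum_m m\chi_{B_m}$ with $B_m$ of density $m^{-4}$ one has $\int_1^T|f|^2dt\ll T$ yet no null set off which $f$ is bounded).

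There is, however, a gap in your subsequence extraction. You require $T_{j_k}$ to grow ``much slower than $N_{j_k}$'' (or at least $T_{j_k}/T_{j_{k-1}}\ll N_{j_k}$) and justify this by saying the sequence $(T_j)$ may be thinned at will. Thinning only lets you make $T_{j_k}$ \emph{larger}: for a given $N_j$ the admissible times, i.e.\ those $T$ with $\int_1^{T}|f|^2\mathbf{1}_{|f|^2>N_j}dt\ge\delta T$, form an unbounded but possibly very sparse set, and nothing guarantees an admissible $T$ exists below $N_{j_k}T_{j_{k-1}}$; a priori the smallest admissible time for parameter $N_j$ could be of size $e^{e^{N_j}}$. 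So the constraint as stated cannot be enforced. Fortunately it is also unnecessary: applying Chebyshev on $[1,T]$ rather than only at $T=T_{j_k}$ gives $\mathrm{meas}(\{|f|^2>N_{j_k}\}\cap[1,T])\le CT/N_{j_k}$ for \emph{every} $T$, so for $T\in[T_{j_{k-1}},T_{j_k}]$ one has $\mathrm{meas}(S\cap[1,T])\le\sum_{l<k}CT_{j_l}/N_{j_l}+CT/N_{j_k}$, and density zero follows from $N_{j_k}\to\infty$ together with any sufficiently rapid lower-bounded growth of $T_{j_k}$ --- which thinning does provide. With that repair (only a lower bound on the growth of $T_{j_k}$ is needed, both here and for $\int_{E_k'}|f|^2dt\ge\delta T_{j_k}-CT_{j_{k-1}}\ge(\delta/2)T_{j_k}$), your argument is complete.
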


Proposition \ref{CNS} is proved in Section \ref{prop1}. \\

We now write
\begin{eqnarray} \label{ip}
\langle f, g\rangle=\lim_{T\rightarrow\infty}\frac{1}{T}\int_{1}^{T}f(t)\overline{g(t)}dt
\end{eqnarray}
and $\|f\|=\sqrt{\langle f,f\rangle}$ when the limits exists, and note that if (\ref{sup}) holds then $\|f\|=\lim_{N\rightarrow \infty}\|f_{N}\|$, by the triangle inequality. For example, this is the case when the $f_{N}$ are Bohr's uniformly almost-periodic functions. These are bounded, continuous and represented almost everywhere by a generalised Fourier series
\begin{eqnarray}\label{Bohr}
g= \sum_{\lambda \in \mathbb{R}} c_{\lambda} e_{\lambda}\nonumber
\end{eqnarray}
in which the Fourier coefficients $c_{\lambda}= \langle g,e_{\lambda}\rangle$ are square-summable and $\|f_N\|^2=\sum_{\lambda\in\mathbb{R}}|\langle f_N,e_{\lambda}\rangle|^2$. In this case, the $f$ in (\ref{sup}) belongs to the closure of the space of Bohr functions in the seminorm $\|\cdot\|$. That is, for trivial $\phi$ (i.e. $\|\phi\|=0$) the equivalence class $\phi+f$ belongs to $B^2$---Besicovitch \cite{Bes} proved that $B^2$ is a Hilbert space, i.e. that the analogue of the Riesz-Fischer theorem holds for this inner product space. 

For $f,g\in B^2$ the inner product  (\ref{ip}) exists, the coefficients $\langle f,e_{\lambda}\rangle$ are necessarily non-zero for at most countably many $\lambda\in\mathbb{R}$ and the Parseval relation
\begin{eqnarray}\label{parseval}
\langle f,g\rangle=\sum_{\lambda\in\mathbb{R}}\langle f,e_{\lambda}\rangle \overline{\langle g,e_{\lambda}\rangle}
\end{eqnarray}
holds. Conversely, if $f\in B^2$ then for instance we may take $f_N$ to be a partial sum
\begin{eqnarray}\label{partial}
f_{N}= \sum_{n\leq N} \langle f,e_{\lambda_n}\rangle e_{\lambda_n}
\end{eqnarray}
and verify (\ref{sup}) for this sequence using Bessel's inequality.

In particular, Proposition \ref{CNS} shows that 

\begin{lemma}\label{CNSlemma}
If $f\in B^2$ is non-trivial then $f$ is not concentrated on a null set. 
\end{lemma}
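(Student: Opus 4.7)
\textbf{Proof plan for Lemma \ref{CNSlemma}.} The plan is to invoke Proposition \ref{CNS} directly, using condition (d) as the entry point and then reading off condition (a), which is precisely the conclusion that $f$ is not concentrated on a null set.

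First I would verify that the hypotheses needed to make condition (d) available are satisfied. Since $f \in B^2$, the limit $\|f\|^2 = \lim_{T \to \infty} \tfrac{1}{T}\int_1^T |f|^2 dt$ exists and is finite, and because $f$ is non-trivial, $\|f\|^2 > 0$. Hence $\int_1^T |f|^2 dt \sim \|f\|^2 T$, which yields simultaneously $\int_1^T |f|^2 dt \gg T$ and $\int_1^T |f|^2 dt \ll T$. Consequently all four conditions (a)--(d) of Proposition \ref{CNS} are equivalent for this $f$, so it suffices to establish any one of them.

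Next I would establish (d). Given $\epsilon > 0$, I would take $f_N$ to be a partial Fourier sum of $f$ as in (\ref{partial}). Each $f_N$ is a finite trigonometric polynomial, hence in particular bounded, and by Bessel's inequality applied in the Hilbert space $B^2$ together with the Parseval relation (\ref{parseval}), $\|f - f_N\|^2 \to 0$ as $N \to \infty$. Choosing $N$ large enough so that $\|f - f_N\|^2 < \epsilon$ gives
\[
\limsup_{T \to \infty} \frac{1}{T}\int_1^T |f - f_N|^2 dt < \epsilon,
\]
which is exactly (d). Invoking Proposition \ref{CNS} then produces (a): for every null $S$, $\int_{S \cap [1,T]} |f|^2 dt = o\!\left(\int_1^T |f|^2 dt\right)$, i.e.\ $f$ is not concentrated on a null set.

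There is no substantive obstacle here; the lemma is essentially a repackaging of Proposition \ref{CNS} once one notices that non-triviality in $B^2$ forces the integral $\int_1^T |f|^2 dt$ to be of exact order $T$, and that the very definition of $B^2$ as the closure of the Bohr class in the seminorm $\|\cdot\|$ supplies the bounded approximants demanded by (d).
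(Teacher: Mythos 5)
Your proposal is correct and follows essentially the same route as the paper: the paper likewise obtains the lemma by noting that the partial sums (\ref{partial}) verify condition (d) of Proposition \ref{CNS} via Bessel's inequality, with non-triviality and membership in $B^2$ supplying $\int_1^T|f|^2\,dt\asymp T$ so that (d) is equivalent to (a). You simply make explicit the order-of-$T$ verification that the paper leaves implicit.
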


We can also prove the following. 
\begin{lemma}\label{kncnslemma}Let $k\in\mathbb{N}$ be fixed. If $f\in B^2$ is non-trivial and $f^k$ is locally square integrable then $f^k\in B^2$ if and only if $f^k$ is not concentrated on a null set.
\end{lemma}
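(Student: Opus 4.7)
The forward direction follows immediately from Lemma \ref{CNSlemma} applied to $f^k$ (the degenerate case $\|f^k\|=0$ being trivial, since then condition (a) of Proposition \ref{CNS} holds vacuously).

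For the converse, assume $f^k$ is not concentrated on a null set. Given $\epsilon>0$, since $f \in B^2$ one may pick a trigonometric polynomial $P$ (for instance a partial Fourier sum of $f$) with $\|f-P\| < \epsilon$; the function $P^k$ is then again a trigonometric polynomial, hence in $B^2$. The plan is to show that $\limsup_T T^{-1}\int_1^T |f^k - P^k|^2\,dt$ can be made arbitrarily small by shrinking $\epsilon$: applying this to a sequence $P_n$ with $\|f-P_n\|\to 0$ renders $\{P_n^k\}$ a Cauchy sequence in $B^2$, and completeness of $B^2$ places $f^k$ in $B^2$. Using the factorization $f^k - P^k = (f-P)\sum_{j=0}^{k-1} f^j P^{k-1-j}$, I would split the integration via the sublevel set $E_M = \{|f|\leq M\}$ with $M \geq \|P\|_\infty$. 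On $E_M$ the pointwise estimate $|f^k-P^k|^2 \leq k^2 M^{2(k-1)}|f-P|^2$ gives a contribution of at most $k^2 M^{2(k-1)}\epsilon^2$; on $E_M^c$ the bound $|f^k-P^k|^2 \leq 2|f|^{2k}+2\|P\|_\infty^{2k}$ together with Chebyshev's density estimate $|E_M^c| \leq \|f\|^2/M^2$ gives a contribution of at most $2\eta(M) + 2\|P\|_\infty^{2k}\|f\|^2/M^2$, where $\eta(M) := \limsup_T T^{-1}\int_{E_M^c \cap [1,T]} |f|^{2k}\,dt$.

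The main obstacle is to establish $\eta(M) \to 0$ as $M \to \infty$. Since $\eta$ is non-increasing, this reduces to ruling out a positive limiting value $L$, which I plan to do by contradiction. If $\eta(M) \geq L > 0$ for every $M$ (with $L=\infty$ allowed), the definition of $\limsup$ permits one to choose a sequence $M_j \to \infty$ with $|E_{M_j}^c| \to 0$ and a rapidly increasing $T_j \to \infty$ satisfying $\int_{E_{M_j}^c \cap [T_{j-1},T_j]}|f|^{2k}\,dt \geq \tfrac{L}{2}(T_j-T_{j-1})$, after absorbing a negligible contribution from $[1,T_{j-1}]$. Provided the $T_j$ grow fast enough, the union $S := \bigcup_j (E_{M_j}^c \cap [T_{j-1},T_j])$ is a null subset of $[1,\infty]$, yet $f^k$ carries a fixed positive proportion of its $L^2$-mass on $S$ along the subsequence $T_j$, contradicting condition (a) of Proposition \ref{CNS} applied to $f^k$. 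Once $\eta(M)\to 0$ is in hand, first fixing $M$ large and then taking $\epsilon$ small (depending on $M$) yields approximations of $f^k$ with arbitrarily small $B^2$-seminorm error, as required.
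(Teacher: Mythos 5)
Your overall strategy---approximate $f$ by a trigonometric polynomial $P$, control $f^k-P^k$ where $f$ is bounded, and use the non-concentration hypothesis to dispose of the bad set---is the same as the paper's, and your diagonal construction for $\eta(M)\to 0$ is a plausible substitute for the paper's steps (\ref{complement})--(\ref{obound}). However, there is a genuine gap in the order of your quantifiers. Your error bound is
\[
k^2M^{2(k-1)}\epsilon^2+2\eta(M)+2\|P\|_\infty^{2k}\|f\|^2/M^2 ,
\]
subject to the constraint $M\geq\|P\|_\infty$. The first term forces you to choose $\epsilon$ \emph{after} $M$, but the constraint and the third term force $M\geq\|P_\epsilon\|_\infty$ (indeed $M\gg\|P_\epsilon\|_\infty^{k}$), i.e.\ $M$ must be chosen \emph{after} $\epsilon$, since $P=P_\epsilon$ depends on $\epsilon$. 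For a general non-trivial $f\in B^2$ the sup-norms $\|P_\epsilon\|_\infty$ need not stay bounded as $\epsilon\to 0$: for partial sums one only has $\|f_N\|_\infty\leq\sum_{n\leq N}|\langle f,e_{\lambda_n}\rangle|$, which can grow arbitrarily fast relative to the decay of $\|f-f_N\|$. So ``first fixing $M$ large and then taking $\epsilon$ small'' leaves both the constraint $M\geq\|P\|_\infty$ and the third term uncontrolled. The paper avoids $\|f_N\|_\infty$ entirely: it applies Chebyshev to the uniformly bounded $B^2$-norms $\|f_N\|$ to get $|f_N|<C$ with $C$ independent of $N$ outside a set of small density, and runs the binomial estimate (\ref{binom}) on that set rather than on your $E_M$. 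You would need to import that idea (or an equivalent one) to close the argument.

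A second, more easily repaired gap is in the contradiction establishing $\eta(M)\to 0$. Your construction yields $\int_{S\cap[1,T_j]}|f|^{2k}\geq\tfrac{L}{2}(T_j-T_{j-1})\gg L\,T_j$, but condition (a) of Proposition \ref{CNS} applied to $f^k$ compares this with $\int_1^{T_j}|f|^{2k}$, not with $T_j$; if $\int_1^T|f|^{2k}/T\to\infty$ along a subsequence, your lower bound does not contradict (a). You must first show $\int_1^T|f|^{2k}\ll T$---which does follow from non-concentration by a similar diagonal argument, since if the mean of $|f|^{2k}$ blows up along a subsequence then essentially all of its mass lies on $E_M^c$ for every fixed $M$ along that subsequence, and your union $S$ then carries a full proportion of the mass---and only then does $\tfrac{L}{2}T_j$ constitute a positive proportion of $\int_1^{T_j}|f|^{2k}$. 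With both repairs in place your argument would go through, but as written these are real gaps rather than omitted routine details.
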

\begin{proof}
The ``only if'' statement follows from Lemma \ref{CNSlemma}, so we prove the converse. Assuming that $f\in B^2$, for every $\epsilon>0$ there is an $N(\epsilon)$ and a sequence of Bohr functions $f_N$ such that $\|f-f_N\|<\epsilon$ for $N>N(\epsilon)$, so there is a sequence of null sets $S_1(N)$ such that 
\begin{eqnarray}\label{pointwise1}
\left|f(t)-f_N(t)   \right|<\epsilon \hspace{1cm}(t\in S_1^c(N)).
\end{eqnarray}
Moreover, since $f_N$ converges in $B^2$ so $\|f_N\|\rightarrow\|f\|$ as $N\rightarrow\infty$, there is a constant $C$ and a sequence of null sets $S_2(N)$ such that 
\begin{eqnarray}\label{pointwise3}
\left|f_N(t)   \right|<C \hspace{1cm}(t\in S_2^c(N))
\end{eqnarray}
uniformly in $N$.

We set 
\begin{eqnarray}\label{setdef}
S(N)=S_1(N)\cup S_2(N)
\end{eqnarray}
and use the binomial expansion to see that 
\begin{eqnarray}\label{binom}
\left|f^k(t)-f^k_N(t)\right|&<&\sum_{j=1}^k{k\choose j}C^{k-j}\epsilon^j
\nonumber\\
&=&(C+\epsilon)^k-C^k\ll_k\epsilon\hspace{1cm}\left(t\in S^c(N)\right),
\end{eqnarray}
by (\ref{pointwise1}) and (\ref{pointwise3}). Assuming that $f^k$ is locally square-integrable, by (\ref{binom}) and the triangle inequality we have
\begin{eqnarray}\label{meanconvergence}
&&\limsup_{T\rightarrow\infty}\left|\left(\frac{1}{T}\int_{S^c(N)\cap [1,T]}\left|f(t)\right|^{2k}dt\right)^{1/{2}}-\|f^k_N\| \right|
\nonumber\\
&=&\limsup_{T\rightarrow\infty}\left|\left(\frac{1}{T}\int_{S^c(N)\cap [1,T]}\left|f(t)\right|^{2k}dt\right)^{1/{2}}-\left(\frac{1}{T}\int_{S^c(N)\cap [1,T]}\left|f_N(t)\right|^{2k}dt\right)^{1/{2}} \right|\nonumber\\
&\leq&\limsup_{T\rightarrow\infty}\left(\frac{1}{T}\int_{S^c(N)\cap [1,T]}\left|f^k(t)-f^k_N(t)\right|^{2}dt\right)^{1/{2}}\ll_k\epsilon\hspace{1cm}(N>N(\epsilon)),\nonumber\\
\end{eqnarray}
which shows that
\begin{eqnarray}\label{complement}
\frac{1}{T}\int_{S^c(N)\cap [1,T]}\left|f(t)\right|^{2k}dt\sim \|f_N^k\|^2\hspace{1cm}(N,T\rightarrow\infty).
\end{eqnarray}
By  (\ref{complement}) we have 
\begin{eqnarray}
\frac{1}{T}\int_{S(N)\cap [1,T]}\left|f(t)\right|^{2k}dt \ll_k \frac{\int_{S(N)\cap [1,T]}\left|f(t)\right|^{2k}dt}{\int_{S^c(N)\cap [1,T]}\left|f(t)\right|^{2k}dt}\nonumber
\end{eqnarray}
and, since we are assuming $f^k$ is not concentrated on a null set, this is 
\begin{eqnarray}
\ll_k \frac{\int_{S(N)\cap [1,T]}\left|f(t)\right|^{2k}dt}{\int_1^T\left|f(t)\right|^{2k}dt}\ll_k \epsilon\hspace{1cm}(T>T(\epsilon)),\nonumber
\end{eqnarray}
so for fixed $k\in\mathbb{N}$ we have
\begin{eqnarray}\label{obound}
\lim_{T\rightarrow\infty}\frac{1}{T}\int_{S(N)\cap [1,T]}\left|f(t)\right|^{2k}dt=0.
\end{eqnarray}
Using (\ref{binom}) again, (\ref{obound}) and Cauchy-Schwarz, it follows that
\begin{eqnarray}
\limsup_{T\rightarrow\infty}\frac{1}{T}\int_1^T\left |f^k(t)-f^k_N(t)\right|^2dt &\ll_k& \epsilon^2+\limsup_{T\rightarrow\infty}\frac{1}{T}\int_{S(N)\cap [1,T]}\left|f^k(t)-f^k_N(t)\right|^{2}dt\nonumber\\
&=&\epsilon^2\nonumber
\end{eqnarray}
so $f^k_N\rightarrow f^k$ in the $B^2$ metric. 
\end{proof}

\section{Proof of Lemma \ref{hlemma}}\label{lemma1proof}
We assume $1/2<\sigma<1$ and write $\zeta=\zeta(\sigma+it)$ unless otherwise specified.  We begin by proving that $\zeta\in B^2$ when  $1/2<\sigma<1$. Fix $\delta>0$ and denote by $R$ the rectangle with vertices $[\sigma+iT,1+\delta+iT,1+\delta+i,\sigma+i]$. Consider the integral 
\begin{eqnarray}\label{inter}
\frac{1}{2iT}\int_{R}\zeta^{k}(s)e^{\lambda(2\sigma-s)}ds.
\end{eqnarray}
Since we have the subconvexity bound 
\begin{eqnarray}\label{subbound}\zeta(\sigma+it)\ll t^{1/6-A}\hspace{1cm}(\sigma\geq 1/2)
\end{eqnarray} for some $A>0$, for $k\leq 6$ the sum of the horizontal segments is
\begin{eqnarray}\label{ppo}
\ll \frac{1}{T} \int_{\sigma}^{1+\delta}|\zeta(r+iT)|^{k}e^{\lambda(2\sigma-r)}dr+\frac{e^{\lambda \sigma}}{T}\ll \frac{e^{\lambda \sigma}}{T^{A}}
\end{eqnarray}
and the sum of the vertical segments is
\begin{eqnarray}\label{vgg}
&&\frac{e^{2\lambda\sigma}}{T}\int_{1}^{T}\zeta^{k}(1+\delta+it)e^{-\lambda(1+\delta+it)}dt-\frac{e^{\lambda\sigma}}{T}\int_{1}^{T}\zeta^{k}(\sigma+it)e^{-i\lambda t}dt\nonumber\\
&=&\sum_{n=1}^{\infty} \frac{d_{k}(n)}{n^{1+\delta}}\frac{e^{\lambda(2\sigma-1-\delta)}}{T}\int_{1}^{T}e^{-i(\lambda+\log n) t}dt -\frac{e^{\lambda\sigma}}{T}\int_{1}^{T}\zeta^{k}(\sigma+it)e^{-i\lambda t}dt
\end{eqnarray}
because the Dirichlet series above is absolutely convergent. By (\ref{inter}), (\ref{ppo}) and (\ref{vgg}) we see that 
\begin{eqnarray}
\frac{1}{T}\int_{1}^{T}\zeta^{k}(\sigma+it)e^{-i\lambda t}dt&=&e^{\lambda(\sigma-1-\delta)}\sum_{n=1}^{\infty} \frac{d_{k}(n)}{n^{1+\delta}}\frac{1}{T}\int_{1}^{T}e^{-i(\lambda+\log n) t}dt+O(T^{-A})\nonumber\\
&=&\left\{
                \begin{array}{ll}
                  \frac{d_{k}(n)}{n^{\sigma}} +o\left(1\right)\hspace{1.25cm}(\lambda=-\log n)  \\
               o\left(1\right)\hspace{2.55cm}(\textrm{otherwise}),
                \end{array}
              \right.\nonumber
\end{eqnarray}
that is 
\begin{eqnarray}\label{coefficients}
\langle \zeta^k,e_{\lambda}\rangle = \left\{
                \begin{array}{ll}
                  \frac{d_{k}(n)}{n^{\sigma}} \hspace{1.25cm}(\lambda=-\log n \hspace{0.3cm}(n\in\mathbb{N}))  \\
               0\hspace{1.83cm}(\textrm{otherwise}).
                \end{array}
              \right.
              \end{eqnarray}
We now set  
\begin{eqnarray}
\zeta_{k,N}=\sum_{n\leq N}\frac{d_{k}(n)}{n^{\sigma+i\cdot}}.\nonumber
\end{eqnarray}
Then, for $\sigma>\sigma_k$ and $k\leq 6$, we use (\ref{asy3}) and (\ref{coefficients}) to compute
\begin{eqnarray}\label{beseq}
\|\zeta^k-\zeta_{k,N}\|^2 &=& \|\zeta^k\|^2+\|\zeta_{k,N}\|^2-2\Re\langle \zeta^k,\overline \zeta_{k,N}    \rangle \nonumber\\
&=&\|\zeta^k\|^2-\|\zeta_{k,N}\|^2\nonumber\\
&= &\sum_{n> N} \frac{d^2_{k}(n)}{n^{2\sigma}}<\epsilon\hspace{1cm}(N>N(\epsilon)).\nonumber
\end{eqnarray}
In particular, the above and the fact that $\sigma_1=1/2$ together show that $\zeta\in B^2$ when $1/2<\sigma<1$.  Since $\zeta^k$ is locally square integrable, it follows from Lemma \ref{kncnslemma} that $\zeta^k\in B^2$ if and only if $\zeta^k$ is not concentrated on a null set. To see that this is the case when $\sigma>\sigma_k$, we note that if  $1/2<\sigma<1$ then
\begin{eqnarray}\label{cds}
\frac{1}{T}\int_1^T|\zeta(\sigma+it)|^{2k}dt\sim \sum_{r=1}^{\infty} \frac{d^2_k(r)}{r^{2\sigma}}+\frac{1}{T}\int_{S(N)\cap[1,T]}|\zeta(\sigma+it)|^{2k}dt
\end{eqnarray}
as $N,T\rightarrow\infty$, by (\ref{complement}). Since the left hand side of (\ref{cds}) is $\sim \sum_{r=1}^{\infty} d^2_k(r)r^{-2\sigma}$ when  $\sigma>\sigma_k$  by (\ref{asy3}), while $\int_1^T|\zeta(\sigma+it)|^{2k}dt\gg T$ and $|\zeta(\sigma+it)|^{2k}$ is bounded on the complements $S^c(N)$ for fixed $k$ by (\ref{binom}), we see that $\zeta^k(\sigma+it)$ is not concentrated on a null set when $\sigma>\sigma_k$.

\section{Proof of Lemma \ref{clemma}}\label{lemma2proof}
Let $\sigma_k<\sigma(n)<1$ be a decreasing sequence with limit $\sigma_k>1/2$ and put 
\begin{eqnarray}
f_n(t)=\zeta (\sigma(n)+it).\nonumber 
\end{eqnarray}
Since $f_n^k\in B^2$ by Lemma \ref{hlemma}, we suppose that $m>n$ and using (\ref{parseval}) compute 
\begin{eqnarray}
\|f^k_m-f^k_n\|^2 &=&\sum_{r=1}^{\infty}\frac{d^2_k(r)}{r^{2\sigma(m)}}+\sum_{r=1}^{\infty}\frac{d^2_k(r)}{r^{2\sigma(n)}}-2\sum_{r=1}^{\infty}\frac{d^2_k(r)}{r^{\sigma(m)+\sigma(n)}}\nonumber\\
&=& \sum_{r=2}^{\infty}\frac{d^2_k(r)}{r^{2\sigma(m)}}\left(1-r^{\sigma(m)-\sigma(n)}\right)^2
\nonumber\\
&<& \sum_{r=2}^{\infty}\frac{d^2_k(r)}{r^{2\sigma_k}}\left(1-r^{\sigma_k-\sigma(n)}\right)
\nonumber\\
&<& \left(1-M^{\sigma_k-\sigma(n)}\right)\sum_{2\leq r\leq M}^{}\frac{d^2_k(r)}{r^{2\sigma_k}}+\sum_{r> M}^{}\frac{d^2_k(r)}{r^{2\sigma_k}}
\nonumber\\
&\ll_{k,\sigma_k}& 1-M^{\sigma_k-\sigma(n)}+M^{1-2\sigma_k+\epsilon}\nonumber
\end{eqnarray}
for any fixed $\epsilon>0$. Thus, for any $\delta>0$, we may take $M(\delta)=\delta^{(1-2\sigma_k+\epsilon)^{-1}}$ and choose $N=N(\delta)$ such that 
\begin{eqnarray}\label{uniform}
\sigma(N)&< &\sigma_k+\frac{\log\left(\frac{1}{1-\delta}\right)}{\log M}\nonumber\\
&=&\sigma_k+(2\sigma_k-1-\epsilon)\frac{\log\left(\frac{1}{1-\delta}\right)}{\log\frac{1}{\delta}},\nonumber
\end{eqnarray}
in which case
\begin{eqnarray}\label{Cauchys}
\|f^k_m-f^k_n\|^2<2\delta\hspace{1cm}(m>n>N)\nonumber
\end{eqnarray}
so the sequence $f^k_n$ is a Cauchy sequence in $B^2$. As such, there is an $f^k_{\infty}\in B^2$ such that $f^k_n\rightarrow f^k_{\infty}$ in $B^2$, and of course 
\begin{eqnarray}\label{norm}
\|f^k_{\infty}\|^2=\lim_{n\rightarrow\infty}\|f^k_n\|^2=\sum_{r=1}^{\infty} \frac{d^2_k(r)}{r^{2\sigma_k}}.
\end{eqnarray}

Next, let $k\in\mathbb{N}$ be fixed and $z=x+iy$. We consider the Laplace transform 
\begin{eqnarray}\label{Fdef}
L_{\sigma,k}(z)=\int_1^{\infty}|\zeta(\sigma+it)|^{2k}e^{-zt}dt\hspace{1cm}(x>0)
\end{eqnarray}
and note that $L_{\sigma,k}(z)$ is analytic in this region because $\zeta(\sigma+it)$ is polynomially bounded. Moreover, if $x>0$ then
 \begin{eqnarray}
f^k_n(t)e^{-xt}\rightarrow \zeta^k(\sigma_k+it)e^{-xt}\hspace{1cm}(t\in[1,\infty])\nonumber
\end{eqnarray} uniformly as $n\rightarrow\infty$. As such
\begin{eqnarray}\label{Fcon}
L_{\sigma_k,k}(z)
&=&\int_1^{\infty}\lim_{n\rightarrow\infty}|f_n(t)|^{2k}e^{-zt}dt\nonumber\\
&=&\lim_{n\rightarrow\infty}L_{\sigma(n),k}(z)
\hspace{3cm}(x>0).
\end{eqnarray}

Now, Abel's theorem for integrals ensures that if $f,g\in B^2$ then
\begin{eqnarray}\label{abel}
\langle f,g\rangle=\lim_{x\searrow 0}x\int_{1}^{\infty}f(t)\overline{g(t)}e^{-xt}dt
\end{eqnarray}
so, by (\ref{uniform}), (\ref{Fcon}) and (\ref{abel}), we have
\begin{eqnarray}
\|f^k_{\infty}\|^2&=&\lim_{n\rightarrow\infty}\|f^k_{n}\|^2\nonumber\\
&=&\lim_{n\rightarrow\infty} \lim_{x\searrow 1}
xL_{\sigma(n),k}(x)\nonumber\\
&=&\lim_{x\searrow 1}\lim_{n\rightarrow\infty} 
xL_{\sigma(n),k}(x)\nonumber\\
&=&\lim_{x\searrow 1}
xL_{\sigma_k,k}(x),\nonumber
\end{eqnarray}
where interchanging the limits in the third line above is justified because the $n$ limit is achieved pointwise in $x$ and the $x$ limit is achieved uniformly in $n$ because $f_n^k$ is a Cauchy sequence in $B^2$. 

Thus if $k\in\mathbb{N}$ and $\sigma_k>1/2$ are fixed then 
\begin{eqnarray}\label{asy4}
\int_1^{\infty}|\zeta(\sigma_k+it)|^{2k}e^{-xt}dt\sim \frac{1}{x}\sum_{r=1}^{\infty} \frac{d^2_k(r)}{r^{2\sigma_k}}\hspace{1cm}(x\searrow 0)
\end{eqnarray}
and, by the version of the Hardy-Littlewood Tauberian theorem (Titchmarsh \cite{Titch}, Section 7.2), (\ref{asy4}) implies that
\begin{eqnarray}\label{cds1}
\frac{1}{T}\int_1^T|\zeta(\sigma_k+it)|^{2k}dt\sim \sum_{r=1}^{\infty} \frac{d^2_k(r)}{r^{2\sigma_k}}\hspace{1cm}(T\rightarrow \infty).
\end{eqnarray}
Lastly, it is easy to see that (\ref{cds1}) implies Lemma \ref{clemma} by taking $\sigma=\sigma_k$ in (\ref{cds}) and noting that consequently $\zeta^k(\sigma_k+it)$ is not concentrated on a null set.

\section{Proof of Proposition \ref{CNS}}\label{prop1}
Let $g$ be bounded, assume (\ref{meas}) and denote by $S^c$ the complement of $S$ in $[1,\infty]$. Then for every null set $S$ we have 
\begin{eqnarray}\label{prob3}
\left|\int_{1}^Tf^2gdt \right|&=&\left|\int_{S^c\cap [1,T]}f^2gdt+\int_{S\cap [1,T]}f^2gdt\right|\nonumber\\
&\leq&\left|\int_{S^c\cap [1,T]}f^2gdt\right|+o\left(\int_{1}^T|f|^2dt\right)\nonumber\\
&\leq&(1+o(1))\left(\int_{1}^T|f|^2dt\right)\sup_{t\in S^c\cap [1,T]}|g(t)|.\nonumber
\end{eqnarray}
Since $S$ is an arbitrary null set, we minimise over those $S$ giving 
\begin{eqnarray}\label{prob4}
\limsup_{T\rightarrow\infty}\frac{\left|\int_{1}^Tf^2gdt\right|}{\int_{1}^T|f|^2dt}\leq \inf_{|S|=0}
\sup_{t\in S^c}|g(t)|= \sup\{x:\Pr(|g|\geq x)>0\}
\end{eqnarray}
which gives (\ref{prob}). Conversely, assume (\ref{prob}) and take $g=e^{-2i\theta_f}\chi_{S}$ where $\chi_S$ is the characteristic function of $S$. Then 
\begin{eqnarray}
\limsup_{T\rightarrow\infty}\frac{\int_{S\cap [1,T]}|f|^2dt}{\int_{1}^T|f|^2dt}=\limsup_{T\rightarrow\infty}\frac{\left|\int_{1}^Tf^2gdt\right|}{\int_{1}^T|f|^2dt}\leq \sup\{x:\Pr(|g|\geq x)>0\}=0\nonumber
\end{eqnarray}
which gives (\ref{meas}).

Now suppose that $\int_{1}^T|f|^2dt\gg T$. We have 
\begin{eqnarray}\label{prob5}
\left(\frac{\int_{S\cap [1,T]}|f|^2dt}{\int_{1}^T|f|^2dt}\right)^{1/2}
&\sim& \left| \left(\frac{\int_{S\cap [1,T]}|f|^2dt}{\int_{1}^T|f|^2dt}\right)^{1/2} - \left(\frac{\int_{S\cap [1,T]}|f_{N,S}|^2dt}{\int_{1}^T|f|^2dt}\right)^{1/2} \right|\nonumber\\
&\leq&  \left(\frac{\int_{S\cap [1,T]}|f-f_{N,S}|^2dt}{\int_{1}^T|f|^2dt}\right)^{1/2} \nonumber
\end{eqnarray}
so (\ref{ot}) implies (\ref{meas}). Conversely, (\ref{meas}) implies (\ref{ot}) by taking $f_{N,S}=0$. 

To complete the proof, we note that if $\int_{1}^T|f|^2dt\ll T$ then $f$ is bounded except possibly on a null set $S=S(f)$, so there is already a bounded function $f_{N,S^c}$ such that $|f-f_{N,S^c}|^2<\epsilon$ on $S^c$. Thus, taking 
\begin{eqnarray}
f_N=f_{N,S}\chi_{S}+f_{N,S^c}\chi_{S^c},\nonumber
\end{eqnarray}
we have
\begin{eqnarray}
\frac{1}{T}\int_{1}^T|f-f_N|^2 dt &=&\frac{1}{T}\int_{S\cap [1,T]}|f-f_{N,S}|^2dt+\frac{1}{T}\int_{S^c\cap [1,T]}|f-f_{N,S^c}|^2dt\nonumber\\
&\ll & \frac{\int_{S\cap [1,T]}|f-f_{N,S}|^2dt}{\int_{1}^T|f|^2dt}+\frac{\epsilon}{T}\textrm{meas}\left(S^c\cap [1,T]   \right)
\nonumber
\end{eqnarray}
and using (\ref{ot}) gives (\ref{sup}). Conversely,  (\ref{sup}) implies  (\ref{ot}) if $\int_{1}^T|f|^2dt\gg T$.

\noindent\emph{Email address}:\texttt{ks614@exeter.ac.uk}


\begin{thebibliography}{99}
 

\bibitem{Bes}A.S. Besicovitch. \emph{On generalised almost periodic functions}. Proceedings of the London Mathematical Society. s2-25, 495-512 (1926).

 

\bibitem{HL} 
G.H. Hardy, J.E. Littlewood. \emph{Contributions to the theory of the Riemann zeta-function and the theory of the distribution of primes}. Acta Math. 41, 119-196  (1918).

 \bibitem{HL2} G.H. Hardy, J.E. Littlewood. \emph{On Lindel\"of's hypothesis concerning the zeros of the Riemann zeta-function}. Proc. Royal. Soc. (A) 103, 403-412 (1923).

\bibitem{HB}
D.R. Heath-Brown. \emph{The tweflth power moment of the Riemann zeta-function}. The Quarterly Journal of Mathematics 29, 4,  443-468 (1978).



\bibitem{I}  A.E. Ingham. \emph{Mean-values theorems in the theory of the Riemann zeta-function}. Proc. Lond. Math. Soc. 27, 273-300 (1926).
  
  
    
\bibitem{IV} A. Ivi\'{c}.   \emph{The Theory of the Riemann Zeta-Function with Applications}. John Wiley \& Sons, New York (1985).


\bibitem{Mot} Y. Motohashi. \emph{A relation between the Riemann zeta-function and the hyperbolic Laplacian}. Ann. Scuola Norm. Sup. Pisa, Cl. Sci. IV ser. 22, 299-313 (1995).

\bibitem{Titch} E.C. Titchmarsh. \emph{The theory of the Riemann zeta-function}. 2nd ed. Oxford Univ. Press, New York (1986).

\end{thebibliography}
\end{document}